\def\C{{\mathbb C}}
\def\T{{\mathbb T}}
\def\R{{\mathbb R}}
\def\D{{\mathbb D}}
\def\LL{{\mathcal L}}
\newcommand{\norm}[1]{\left\Vert#1\right\Vert}
\DeclareMathOperator{\BMOA}{BMOA}
\DeclareMathOperator{\VMOA}{VMOA}
\DeclareMathOperator{\LMOA}{LMOA}
\newtheorem{theorem}{Theorem}[section]
\newtheorem{proposition}[theorem]{Proposition}
\newtheorem{corollary}[theorem]{Corollary}
\newtheorem{assumption}{Assumption}
\theoremstyle{definition}
\newtheorem{example}[theorem]{Example}
\begin{document}

\title{Weak compactness of operators acting on \MakeLowercase{o}--O type spaces}

\author{Karl-Mikael Perfekt}

\address{%
  Department of Mathematical Sciences \\
  Norwegian University of Science and Technology \\
  7491 Trondheim
}

\thanks{2010 Mathematics Subject Classification: 30H30, 30H35, 46B50, 46E15, 47A99, 47B33, 47G10.  }

\begin{abstract}
We consider operators $T : M_0 \to Z$ and $T : M \to Z$, where $Z$ is a Banach space and $(M_0, M)$ is a pair of Banach spaces belonging to a general construction in which $M$ is defined by a "big-$O$" condition and $M_0$ is given by the corresponding "little-$o$" condition. Prototype examples of such spaces $M$ are given by $\ell^\infty$, weighted spaces of functions or their derivatives, bounded mean oscillation, Lipschitz-H\"older spaces, and many others. The main result characterizes the weakly compact operators $T$ in terms of a certain norm naturally attached to $M$, weaker than the $M$-norm, and shows that weakly compact operators $T : M_0 \to Z$ are already quite close to being completely continuous. Further, we develop a method to extract $c_0$-subsequences from sequences in $M_0$. Applications are given to the characterizations of the weakly compact composition and Volterra-type integral operators on weighted spaces of analytic functions, $BMOA$, $VMOA$, and the Bloch space. \\
\textbf{Keywords: } weakly compact, c0 subspace, composition operator, integral operator, weighted space, BMO, Bloch space.
\end{abstract}

\maketitle 
\section{Introduction}
Let $Z$ be a Banach space. The main result of this paper characterizes the weak compactness of operators $T : M_0 \to Z$ and $T : M \to Z$, where $(M_0, M)$ is a pair of Banach spaces in which $M$ is defined by a "big-$O$" condition and $M_0$ by the corresponding "little-$o$" condition. See \eqref{eq:M} and \eqref{eq:M0} for the precise definition. The class of spaces $(M_0, M)$ is large and examples include $c_0$ and $\ell^\infty$, weighted and the corresponding vanishing weighted spaces of continuous, analytic or harmonic functions, M\"obius invariant spaces of analytic functions, Lipschitz-H\"older spaces, bounded and vanishing mean oscillation (BMO and VMO), and several others. The pair $(M_0, M)$ was first introduced in \cite{Perf13}, and the quoted examples are given there.  

This paper is inspired by recent works on the compactness properties of composition and integral operators acting on specific examples of spaces $M_0$ and $M$ \cite{Cont14}, \cite{Lait11}, \cite{Lait13}, \cite{Lefe08}. It often turns out that weak compactness and compactness are equivalent for these classes of operators, a phenomenon which can be readily understood given the main results of this article.

For the statements of the theorems, note that $M$ is associated with a reflexive Banach space $X$ in which $M$ is continuously contained (see Section \ref{sec:prelim}). For instance, $\ell^\infty$ is continuously contained in a weighted $\ell^2$-space.

{
\renewcommand{\thetheorem}{\ref{thm:main}}
\begin{theorem}
  A bounded operator $T : M_0 \to Z$ is weakly compact if and only if there for each $\varepsilon > 0$ exists an $N > 0$ such that
  \begin{equation} 
  \|Tx\|_Z \leq N \|x\|_X + \varepsilon \|x\|_{M}, \quad x \in M_0.
  \end{equation}
\end{theorem}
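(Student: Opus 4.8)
The plan is to pass to the bidual and invoke Gantmacher's theorem, which (after the identification $M = M_0^{**}$ recalled in Section~\ref{sec:prelim}) reduces the weak compactness of $T$ to the single inclusion $T^{**}(M) \subseteq Z$, where $Z$ is viewed inside $Z^{**}$. Throughout I would lean on the basic compatibility between the two topologies in play: on $M$-bounded sets the weak-$*$ topology $\sigma(M_0^{**}, M_0^*)$ agrees with the weak topology inherited from the reflexive space $X$. This is the bridge that ties the $X$-norm occurring in the estimate to the duality $M = M_0^{**}$, and it will be used in both directions.

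For sufficiency, assume the estimate and fix $x \in M$ with $\|x\|_M \le 1$; the goal is $T^{**}x \in Z$. By Goldstine's theorem there is a net $(x_\alpha) \subseteq M_0$ with $\|x_\alpha\|_M \le 1$ and $x_\alpha \to x$ weak-$*$, hence $x_\alpha \to x$ weakly in $X$ by reflexivity. The difficulty is that weak-$*$ approximation yields only weak, not norm, convergence in $X$, while the hypothesis controls $\|Tx\|_Z$ through the $X$-norm. I would remove this obstruction by Mazur's theorem: passing to convex combinations $y_j$ of the $x_\alpha$ (still lying in $M_0$ with $\|y_j\|_M \le 1$), one can arrange $y_j \to x$ in $X$-norm, and then, by the topological compatibility above, $y_j \to x$ weak-$*$ as well. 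Given $\delta > 0$, choose $\varepsilon = \delta/4$ and the corresponding $N$; since $(y_j)$ is $X$-norm Cauchy, the estimate
\[
\|Ty_j - Ty_k\|_Z \le N\|y_j - y_k\|_X + 2\varepsilon
\]
gives $\|Ty_j - Ty_k\|_Z < \delta$ for large $j,k$. Thus $(Ty_j)$ is norm-Cauchy and converges to some $w \in Z$; as $Ty_j = T^{**}y_j \to T^{**}x$ weak-$*$ in $Z^{**}$, we conclude $T^{**}x = w \in Z$, and Gantmacher completes this direction.

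For necessity I would argue by contraposition. If the estimate fails, there are $\varepsilon_0 > 0$ and, for every integer $N$, an element which after normalization satisfies $x_N \in M_0$, $\|x_N\|_M = 1$ and $\|Tx_N\|_Z > \varepsilon_0$, while $\|Tx_N\|_Z \le \|T\|\,\|x_N\|_{M_0} = \|T\|$ forces $\|x_N\|_X < \|T\|/N \to 0$. Hence $(x_N)$ is $M$-normalized and $X$-null, and here I would invoke the extraction method announced in the introduction: such a sequence admits a subsequence $(x_{N_k})$ equivalent to the unit vector basis of $c_0$, spanning a copy $Y \cong c_0$ inside $M_0$. Since $c_0$ has the Dunford--Pettis property and the restriction $T|_Y$ is weakly compact, $T|_Y$ is completely continuous; as the basis vectors $x_{N_k}$ are weakly null in $Y$, this yields $\|Tx_{N_k}\|_Z \to 0$, contradicting $\|Tx_{N_k}\|_Z > \varepsilon_0$.

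I expect the extraction of the $c_0$-subsequence to be the main obstacle. It is precisely the step where $M$-normalization together with $X$-nullity must be converted, via a disjointification or gliding-hump procedure adapted to the defining big-$O$/little-$o$ structure of the pair $(M_0, M)$, into genuine $c_0$-behaviour. Its necessity is structural rather than cosmetic: without it, weak compactness together with the topological compatibility only delivers that $Tx_{N_k} \to 0$ \emph{weakly}, which is far too weak to contradict $\|Tx_{N_k}\|_Z > \varepsilon_0$; one really needs the norm nullity supplied by complete continuity on the copy of $c_0$, which is exactly the sense in which weakly compact operators on $M_0$ turn out to be close to completely continuous.
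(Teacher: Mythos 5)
Your proposal is correct and follows essentially the same route as the paper: for necessity you normalize a violating sequence, observe that boundedness of $T$ forces it to be $X$-null, extract a $c_0$-subsequence via the extraction theorem (Theorem \ref{thm:c0}), and contradict weak compactness through the Dunford--Pettis property of $c_0$, exactly as the paper does; for sufficiency you give a self-contained Goldstine/Mazur/Gantmacher argument for the fact that the paper simply cites (Proposition 10 of Lef\`evre et al.), using the weak compactness of the inclusion $M_0 \to X$ implicitly through reflexivity of $X$ and the density of $X^*$ in $M_0^*$. The only cosmetic deviation is in the necessity endgame, where you invoke complete continuity of $T$ restricted to the $c_0$-copy directly, rather than performing the paper's second Bessaga--Pe\l czy\'nski extraction on the image sequence $(Tx_{n_k})$ to exhibit $T$ as an isomorphism between two copies of $c_0$; both variants rest on the same Dunford--Pettis argument.
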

\addtocounter{theorem}{-1}
}
A similar description of the weakly compact operators on $C(K)$-spaces was given by Niculescu, and a far-reaching generalization to operators acting on general $C^*$-algebras is due to Jarchow \cite{Jarc86}. More recently, characterizations in the same spirit have been given for operators acting on $H^\infty$ (\cite{Lefe05}) and certain subspaces of Orlicz spaces  \cite{Lefe08}. 

In \cite{Perf13} it was proven that $M_0^{**} \simeq M$ in a canonical way. Therefore, Theorem \ref{thm:main} also applies to operators $T : M \to Z$ such that $(T|_{M_0})^{**} = T$ -- i.e. operators $T$ which are $\textrm{weak}^*$-$\textrm{weak}$ continuous, a continuity property which is simple to verify in many concrete examples. See Corollary \ref{cor:main}.

To compare the weak compactness characterization with compactness criteria, note that $T : M_0 \to Z$ is completely continuous if and only if for every bounded sequence $(x_n) \subset M_0$ such that $x_n$ converges weakly to zero, it holds that $\lim_n \|Tx_n\|_Z = 0$. To demand instead only weak compactness, one simply replaces the weak convergence of $x_n$ with the stronger  property (see \cite{Perf13}) that $x_n$ converges to zero in $X$-norm. The two conditions on the sequence $(x_n)$ are in many concrete examples closely related; herein lies the explanation of why weak compactness and compactness often are equivalent for operators on $M_0$ and $M$. See the examples in Section \ref{sec:examples}. 

The motivation for the proof of Theorem \ref{thm:main} comes from \cite{Perf14}, where it was shown that $M_0$ is an M-ideal in $M$. In particular, weakly compact operators on $M_0$ can be characterized in terms of $c_0$-subspaces of $M_0$. The proof hence relies on a procedure to create $c_0$-subspaces, a construction which we summarize as a separate theorem. 

{
\renewcommand{\thetheorem}{\ref{thm:c0}}
\begin{theorem}
Suppose that $x_n \in M_0$, $n = 1,2,3,\ldots$, is a sequence such that $\|x_n\|_M = 1$ and $\lim_{n\to\infty}\|x_n\|_X = 0$. Then $(x_n)_n$ has a subsequence which, as a basic sequence in $M_0$, is equivalent to the canonical basis of $c_0$.
\end{theorem}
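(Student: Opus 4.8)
The plan is to extract the subsequence by a gliding-hump argument, producing terms whose ``supports'' with respect to the defining family are almost disjoint, and then to read off the $c_0$-estimates from this disjointness. Recall from \eqref{eq:M} and \eqref{eq:M0} that the pair $(M_0, M)$ is built from a family $\{L_s\}_{s \in S}$, with $\|x\|_M = \sup_{s \in S}\|L_s x\|$, so that $x \in M_0$ means precisely that $\|L_s x\|$ decays as $s$ approaches the distinguished boundary of $S$, while $\|x\|_X$ controls $\sup_{s \in K}\|L_s x\|$ over any piece $K$ of $S$ away from that boundary (the operators $\{L_s\}_{s\in K}$ being uniformly bounded on $X$, i.e. $\sup_{s\in K}\|L_s\| < \infty$). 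Fixing a summable sequence $\delta_k > 0$ with $\sum_k \delta_k \le \tfrac12$, I aim to choose indices $n_1 < n_2 < \cdots$ and an increasing family of pieces $K_1 \subset K_2 \subset \cdots$ of $S$ such that $\|L_s x_{n_i}\| < \delta_i$ whenever $s \notin K_i$, and $\sup_{s \in K_{i-1}}\|L_s x_{n_i}\| < \delta_i$.

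The construction is inductive, and the order of the two choices at each step is what makes it run. Suppose $n_1, \dots, n_{i-1}$ and $K_1 \subset \cdots \subset K_{i-1}$ have been selected. Since $K_{i-1}$ is a fixed piece away from the boundary, $\sup_{s \in K_{i-1}}\|L_s x_n\| \lesssim_{K_{i-1}} \|x_n\|_X \to 0$; hence I may pick $n_i > n_{i-1}$ with $\sup_{s \in K_{i-1}}\|L_s x_{n_i}\| < \delta_i$. With $x_{n_i}$ now fixed, its membership in $M_0$ furnishes, via \eqref{eq:M0}, a piece $K_i \supset K_{i-1}$ with $\|L_s x_{n_i}\| < \delta_i$ for all $s \notin K_i$. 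The normalization $\|x_{n_i}\|_M = 1$ then guarantees a witness $s_i$ with $\|L_{s_i} x_{n_i}\| > 1 - \delta_i$, which necessarily lies in the shell $K_i \setminus K_{i-1}$.

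Given a finite combination $\sum_k a_k x_{n_k}$, the quantity $\|\sum_k a_k x_{n_k}\|_M = \sup_s \|\sum_k a_k L_s x_{n_k}\|$ is evaluated by locating $s$ relative to the shells: for $s$ in the $i$-th shell only the $i$-th term is large (every $x_{n_k}$ with $k < i$ satisfies $s \notin K_k$, and every $x_{n_k}$ with $k > i$ satisfies $s \in K_{k-1}$, so all lie below their $\delta_k$), while for $s$ beyond all the selected pieces every term is small. This yields the upper bound $\|\sum_k a_k x_{n_k}\|_M \le (1 + \sum_k \delta_k)\max_k |a_k|$. For the lower bound one tests against $s_i$ for the index $i$ realizing $\max_k |a_k|$, using that all other terms are below $\delta_k$ there, to obtain $\|\sum_k a_k x_{n_k}\|_M \ge (1 - \sum_k \delta_k)\max_k |a_k|$. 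Together these show that $(x_{n_k})_k$ is equivalent to the canonical $c_0$-basis, and hence basic. I expect the main obstacle to be the translation step: extracting from the abstract ``o--O'' definitions \eqref{eq:M} and \eqref{eq:M0} the two quantitative localization facts used above -- that each $x \in M_0$ concentrates, above any prescribed level, on a piece $K$ away from the boundary, and that $\|\cdot\|_X$-convergence forces uniform smallness on each fixed such piece. Once these are secured from the standing assumptions on the construction, the gliding hump is routine.
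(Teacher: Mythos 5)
Your proof is correct, and after its opening it takes a genuinely different route from the paper's. Both arguments begin with the same gliding-hump selection: the paper fixes a compact exhaustion $\mathcal{K}_1 \subset \mathcal{K}_2 \subset \cdots$ of $\LL$, uses the definition \eqref{eq:M0} to push the tails of the already-chosen elements outside a larger compact set, and uses Banach--Steinhaus (uniform boundedness of $\{L : L \in \mathcal{K}\}$ on a compact $\mathcal{K}$, which is exactly your second ``localization fact,'' legitimized by the standing assumption that $L \mapsto Lx$ is continuous) together with $\|x_n\|_X \to 0$ to make the new element small on the current compact set; your construction is the mirror image (new element small on the old piece first, then enclose its essential support), and both are sound. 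The divergence is in how the $c_0$-equivalence is then extracted. The paper embeds $M_0$ isometrically into $C_0(\LL, Y)$ via $Vx(L) = Lx$, truncates to $f_n = \mathds{1}_{\mathcal{A}_n} V z_n$ in the space of bounded Baire functions, proves the pointwise bound $\sum_n \|f_n(L)\|_Y < 3/2$ to conclude that $(f_n)$ is weakly unconditionally Cauchy, invokes the Bessaga--Pe\l czy\'nski selection principle to get a $c_0$-basic subsequence of $(f_n)$, and finally transfers back to $(Vz_{n_k})$ by a small-perturbation argument ($\sum_k \|Vz_{n_k} - f_{n_k}\|_\infty < 1$), at the cost of discarding finitely many terms. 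You instead verify the two-sided estimate
\begin{equation*}
\Bigl(1 - \sum_k \delta_k\Bigr) \max_k |a_k| \;\leq\; \Bigl\| \sum_k a_k x_{n_k} \Bigr\|_M \;\leq\; \Bigl(1 + \sum_k \delta_k\Bigr) \max_k |a_k|
\end{equation*}
directly, by locating each $L \in \LL$ in a shell $K_i \setminus K_{i-1}$ and testing the lower bound at the witness $s_i$; your case analysis is complete (the cases $s \in K_1$ and $s$ outside all pieces are covered), and the estimate immediately gives that the whole selected subsequence is basic and equivalent to the canonical $c_0$-basis, with explicit constants and no further subsequence. Your approach buys elementarity and self-containedness -- no auxiliary measurable-function space, no selection principle, no perturbation theory of bases; the paper's approach buys brevity of verification and reuses machinery (weak unconditional Cauchyness and Bessaga--Pe\l czy\'nski) that is needed anyway in the proof of Theorem \ref{thm:main}, where it is applied to the image sequence $(Tx_n)$ in an arbitrary Banach space $Z$, a setting where your shell argument has no analogue.
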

\addtocounter{theorem}{-1}
}
This result is classical for $M_0 = c_0$, and has also been proven for the case when $M_0 = VMO$ \cite{Leib86}, the latter fact which has been used in \cite{Lait11} and \cite{Lait13} to characterize the weak compactness of Volterra-type integral operators and composition operators on the analytic $BMO$-space.

The paper is organized as follows. In Section \ref{sec:prelim} the definitions of $M_0$ and $M$ are given, as well as technical preliminaries; in Section \ref{sec:results} the main results are proven; Section \ref{sec:examples} gives applications of Theorem \ref{thm:main} and its corollary to composition and integral operators on weighted spaces of analytic functions, Bloch spaces, and analytic $BMO$-spaces.

\section{Definitions and preliminaries} \label{sec:prelim}
The spaces $M$ and $M_0$ are defined by
\begin{equation}\label{eq:M}
M(X,\LL) = \left\{ x \in X \, : \, \sup_{L \in \LL} \norm{Lx}_Y < \infty \right\}
\end{equation}
and
\begin{equation} \label{eq:M0}
M_0(X,\LL) = \left\{ x \in M(X, \LL) \, : \, \varlimsup_{\LL \ni L\to\infty} \norm{Lx}_Y = 0 \right\}.
\end{equation}
Here $X$ and $Y$ are Banach spaces, where $X$ is assumed to be separable and reflexive. $\LL$ is a collection of continuous linear operators $L\colon X \to Y$ that is made into a topological space $(\LL, \tau)$ by a $\sigma$-compact locally compact Hausdorff topology $\tau$. The topology should respect the strong operator topology in the sense that for every $x \in X$, the map $T_x\colon \LL \to Y$ given by $T_x L = Lx$ is continuous. The limit $L \to \infty$ in the definition of $M_0$ should be understood in the sense of one-point compactification of $(\LL, \tau)$ (i.e. $L$ should escape all compact sets).

We may assume that $M(X,\LL)$ is dense in $X$ \cite{Perf13}, and we suppose that
\begin{equation*}
\norm{x}_{M(X,\LL)} = \sup_{L \in \LL} \norm{Lx}_Y
\end{equation*}
defines a norm on $M(X,\LL)$ which is stronger than the $X$-norm. As in the concrete examples mentioned in the introduction, we want to consider the situation where the bidual $M_0^{**}$ can be canonically identified with $M$. For this to be true it is necessary to impose the following approximation property.

\begin{assumption} \label{as1} For every $x\in M(X, \LL)$ there is a bounded sequence $\{x_n\}_{n=1}^\infty$ in $M_0(X,\LL)$ such that $x_n$ converges weakly to $x$ in $X$. 
\end{assumption}

Henceforth we always assume that \ref{as1} holds. There is also the stronger hypothesis:

\begin{assumption} \label{as2} For every $x\in M(X,\LL)$ there is a bounded sequence $\{x_n\}_{n=1}^\infty$ in $M_0(X,\LL)$ such that $x_n$ converges weakly to $x$ in $X$ and $\sup_n \norm{x_n}_{M(X,\LL)} \leq \norm{x}_{M(X,\LL)}$. 
\end{assumption}

The next theorem, stating that indeed $M_0^{**} = M$ holds, was proven in \cite{Perf13}. For its statement, note that $M_0(X,\LL)$ can be viewed as a closed subspace of both $M$ and $M_0^{**}$. 
 
\begin{theorem}[\cite{Perf13}] \label{thm:old}
The dual space $X^*$ is continuously contained and dense in $M_0(X, \LL)^*$. Denoting by $$I \colon  M_0(X, \LL)^{**} \to X$$ the adjoint of the inclusion map $J \colon  X^* \to M_0(X,\LL)^*$, the operator $I$ is a continuous isomorphism of $M_0(X, \LL)^{**}$ onto $M(X,\LL)$ which acts as the identity on $M_0(X,\LL)$. Furthermore, $I$ is an isometry if Assumption \ref{as2} holds. 
\end{theorem}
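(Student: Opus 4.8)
The plan is to realize $M_0(X,\LL)$ concretely as a subspace of a space of vector-valued functions vanishing at infinity and then to push all duality statements through Goldstine's theorem together with the reflexivity of $X$. First I would define $\Phi \colon M_0(X,\LL) \to C_0(\LL, Y)$ by $\Phi(x)(L) = Lx$; this is well defined and isometric, since continuity of $L \mapsto Lx$ is exactly the hypothesis that $\tau$ respects the strong operator topology, the vanishing at infinity is the defining condition of $M_0$, and $\|\Phi x\|_\infty = \sup_{L}\|Lx\|_Y = \|x\|_M$. The inclusion $\iota \colon M_0(X,\LL) \to X$ is bounded because the $M$-norm dominates the $X$-norm, and it has dense range: Assumption \ref{as1} places every $x \in M$ in the $X$-weak sequential closure of the subspace $M_0$, whose weak closure equals its norm closure (Mazur), while $M$ is dense in $X$. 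Hence $J = \iota^*\colon X^* \to M_0(X,\LL)^*$ is bounded and injective, which is the asserted continuous containment of $X^*$. The operator $I := J^* = \iota^{**}$ then maps $M_0(X,\LL)^{**}$ into $X^{**} = X$, and naturality of the canonical embeddings, $\iota^{**}\kappa_{M_0} = \kappa_X \iota$, shows immediately that $I$ restricts to the inclusion on $M_0(X,\LL)$.

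Next I would identify the image and the norm of $I$. Fix $\Lambda \in M_0(X,\LL)^{**}$; by Goldstine it is the weak-$*$ limit of a bounded net $(x_\alpha) \subset M_0(X,\LL)$ with $\sup_\alpha \|x_\alpha\|_M \le \|\Lambda\|_{M_0^{**}}$. Testing the weak-$*$ convergence against $\iota^* f$ for $f \in X^*$ shows $x_\alpha \to I\Lambda$ weakly in $X$; since each $L$ is weak-weak continuous and $\|\cdot\|_Y$ is weakly lower semicontinuous, $\|L(I\Lambda)\|_Y \le \liminf_\alpha \|Lx_\alpha\|_Y \le \|\Lambda\|_{M_0^{**}}$, so $I\Lambda \in M$ with $\|I\Lambda\|_M \le \|\Lambda\|_{M_0^{**}}$ and $I$ is a contraction into $M$. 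Surjectivity onto $M$ uses Assumption \ref{as1}: for $x \in M$ a bounded sequence $(x_n) \subset M_0(X,\LL)$ with $x_n \to x$ weakly in $X$ has, by Banach--Alaoglu, a weak-$*$ cluster point $\Lambda$, and testing against $\iota^* f$ gives $I\Lambda = x$. Under Assumption \ref{as2} one arranges $\sup_n \|x_n\|_M \le \|x\|_M$, so weak-$*$ lower semicontinuity of the bidual norm gives $\|\Lambda\|_{M_0^{**}} \le \|x\|_M = \|I\Lambda\|_M$; combined with the contraction bound this forces equality. Thus, once injectivity is established, $I$ is a continuous bijection onto $M$, hence an isomorphism by the open mapping theorem, and an isometry when Assumption \ref{as2} holds.

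The main obstacle is precisely the injectivity of $I$, equivalently the norm density of $X^*$ in $M_0(X,\LL)^*$. Suppose $I\Lambda = 0$. Through the embedding $\Phi$ and the vector-valued Riesz representation $C_0(\LL, Y)^* = \cabv(\LL, Y^*)$, every $\phi \in M_0(X,\LL)^*$ acts as $x \mapsto \int_\LL \langle Lx, d\mu(L)\rangle$ for some $\mu \in \cabv(\LL, Y^*)$, and I must show $\langle \Lambda, \phi\rangle = 0$ for each such $\mu$. Representing $\Lambda$ as above, $x_\alpha \to 0$ weakly in $X$, so $Lx_\alpha \to 0$ weakly in $Y$ for each fixed $L$ while $\sup_\alpha \sup_L \|Lx_\alpha\|_Y < \infty$, and the claim reduces to $\int_\LL \langle Lx_\alpha, d\mu\rangle \to 0$. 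The delicate point is that a bounded \emph{net} satisfies no dominated convergence theorem. The plan to circumvent this is to use the separability of $X$ and the $\sigma$-compactness of $\LL$ to replace the net by a weak-$*$ convergent \emph{sequence} $(x_n)$ (metrizability of the weak-$*$ topology on bounded subsets of the bidual), and then to pass to the total variation $|\mu|$ and a density $d\mu = h\,d|\mu|$ with $\|h(L)\|_{Y^*} = 1$, so that
\[
\langle \Lambda, \phi \rangle = \lim_n \int_\LL \langle L x_n, d\mu(L)\rangle = \lim_n \int_\LL \langle L x_n, h(L)\rangle \, d|\mu|(L) = 0,
\]
the last equality by the scalar dominated convergence theorem, since the integrand is bounded by $\sup_L \|Lx_n\|_Y$ and tends to $0$ pointwise. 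This is exactly where the reflexivity of $X$ is indispensable: it guarantees both that the representing weak limit $I\Lambda$ lives in $X$ and that the pointwise weak limits appearing in the integral vanish.
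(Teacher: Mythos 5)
Your overall architecture is sound and, for the most part, matches the actual argument in \cite{Perf13}: the isometric embedding $Vx(L)=Lx$ into $C_0(\LL,Y)$, the Hahn--Banach extension and Riesz representation $C_0(\LL,Y)^*=\cabv(\LL,Y^*)$, Goldstine/Alaoglu plus weak lower semicontinuity of the norms for the contraction bound $\|I\Lambda\|_M\leq\|\Lambda\|$, Assumption \ref{as1} for surjectivity, Assumption \ref{as2} with weak-star lower semicontinuity for the isometry, and the open mapping theorem to finish. You also correctly identify that injectivity of $I$ is equivalent (via Hahn--Banach in $(M_0^*)^*$) to norm density of $J(X^*)$ in $M_0^*$, and correctly flag this as the crux. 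But your proof of that crux is circular: to replace the Goldstine net by a sequence you invoke weak-star metrizability of bounded subsets of $M_0(X,\LL)^{**}$, which holds if and only if $M_0(X,\LL)^*$ is separable --- and in this setting separability of $M_0^*$ is known only \emph{as a consequence} of the density of the separable space $X^*$ in $M_0^*$, i.e.\ of the very statement you are proving. (Indeed, the present paper's proof of Corollary \ref{cor:main} cites Theorem \ref{thm:old} precisely to justify that separability.) Nor can you extract separability from the embedding $V$: the dual $\cabv(\LL,Y^*)$ is in general wildly nonseparable, and $M_0^*$ is only a quotient of it. So as written, the injectivity/density step does not close, while everything downstream of it (including your isometry argument, which needs injectivity to identify the cluster point $\Lambda'$ built from Assumption \ref{as2} with the original $\Lambda$) is left hanging on it.

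The repair --- and this is the route taken in \cite{Perf13} --- is to prove the density directly, with no sequences and no dominated convergence at all. Given $\phi\in M_0(X,\LL)^*$ with representing measure $\mu\in\cabv(\LL,Y^*)$, truncate to the exhausting compacts: let $\phi_n$ be the functional represented by $\mathds{1}_{\mathcal{K}_n}\mu$. Then $\|\phi-\phi_n\|_{M_0^*}\leq|\mu|(\mathcal{K}_n^c)\to 0$ by countable additivity of the finite variation measure $|\mu|$ (this is where $\sigma$-compactness of $\LL$ enters). Each $\phi_n$ lies in $J(X^*)$: since $T_x\colon\LL\to Y$ is continuous, the family $\{Lx:L\in\mathcal{K}_n\}$ is bounded for each $x$, so Banach--Steinhaus gives $C_n:=\sup_{L\in\mathcal{K}_n}\|L\|_{X\to Y}<\infty$, whence $|\phi_n(x)|\leq C_n\,|\mu|(\mathcal{K}_n)\,\|x\|_X$, and $\phi_n$ extends to $X^*$ by the density of $M_0$ in $X$ (which you correctly derived from Assumption \ref{as1} and Mazur). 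This yields norm density of $J(X^*)$, hence injectivity of $I$ and separability of $M_0^*$ as corollaries rather than inputs; with that step replaced, the rest of your write-up goes through as you arranged it.
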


In the isometric case the author proved in \cite{Perf14} that $M_0$ is an \textit{M-ideal} in $M$. In particular, $M_0$ has Pe\l czy\'nski's property (V), which as a consequence gives the following characterization of weakly compact operators on $M_0$ (see \cite{Harm93}).

\begin{proposition} [\cite{Perf14}]
Suppose that Assumption \ref{as2} holds. If $Z$ is a Banach space and $T : M_0(X,\LL) \to Z$ is a bounded operator, then $T$ is weakly compact if and only if there does not exist a subspace $F \subset M_0(X, \LL)$ isomorphic to $c_0$ such that $T|_F$ is an isomorphism. 
\end{proposition}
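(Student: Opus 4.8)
The plan is to obtain the characterization as a formal consequence of the two facts highlighted in the discussion above: that, under Assumption~\ref{as2}, the space $M_0(X,\LL)$ is an M-ideal in its bidual $M_0(X,\LL)^{**} \simeq M(X,\LL)$, and the general principle that any Banach space which is an M-ideal in its bidual enjoys Pe\l czy\'nski's property~(V). Both ingredients are available in the literature---the first from \cite{Perf14}, the second from the monograph \cite{Harm93}---so the argument amounts to translating property~(V) into the language of $c_0$-subspaces.

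Recall that a Banach space $E$ has property~(V) when every \emph{unconditionally converging} bounded operator $T \colon E \to Z$, for an arbitrary Banach space $Z$, is automatically weakly compact. First I would invoke M-ideal theory: since $M_0(X,\LL)$ is an M-ideal in $M_0(X,\LL)^{**}$, the results of \cite{Harm93} guarantee that $M_0(X,\LL)$ has property~(V). This is precisely where Assumption~\ref{as2} is used, as the M-ideal identification rests on the \emph{isometric} representation $M_0^{**} \simeq M$ furnished by Theorem~\ref{thm:old}.

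It then remains to connect the failure of unconditional convergence with the presence of a fixed copy of $c_0$. By the classical theory of weakly unconditionally Cauchy series (Bessaga--Pe\l czy\'nski), a bounded operator $T$ fails to be unconditionally converging exactly when there is a weakly unconditionally Cauchy series in $E$ whose $T$-image is not unconditionally convergent; and, after extracting an appropriate basic sequence, exactly when $T$ restricts to an isomorphism on some subspace $F \subset E$ isomorphic to $c_0$. Combining this dichotomy with property~(V) yields
\[
T \text{ weakly compact} \iff T \text{ unconditionally converging} \iff T \text{ fixes no copy of } c_0,
\]
which is the assertion of the proposition. (The forward implication, that a weakly compact operator cannot fix a copy of the nonreflexive space $c_0$, is in fact elementary and needs no property~(V).)

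The main obstacle lies not in these formal deductions but in the input imported from \cite{Perf14}: proving that $M_0(X,\LL)$ is an M-ideal in its bidual. That step requires verifying an appropriate geometric condition---typically the three-ball property, or equivalently that the annihilator of $M_0(X,\LL)$ in $M_0(X,\LL)^{***}$ is an L-summand---using the concrete structure of the pair $(M_0, M)$ together with the weak-approximation hypothesis of Assumption~\ref{as2}. Once the M-ideal property is secured, the remainder is standard Banach space theory.
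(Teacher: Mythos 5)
Your proposal follows essentially the same route as the paper, which likewise obtains this proposition by citing the M-ideal result of \cite{Perf14} (where Assumption \ref{as2} enters through the isometric identification $M_0^{**} \simeq M$ of Theorem \ref{thm:old}), deducing Pe\l czy\'nski's property~(V) from the M-ideal structure via \cite{Harm93}, and then translating property~(V) into the $c_0$-fixing statement through the classical Pe\l czy\'nski/Bessaga--Pe\l czy\'nski equivalence between unconditionally converging operators and operators fixing no copy of $c_0$. The argument is correct as outlined.
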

The proof of Theorem \ref{thm:main} is inspired by this proposition, but technically only relies on its forward direction which follows easily for any Banach space from the fact that $c_0$ has the Dunford-Pettis property. 

A sequence $(z_n)_{n=1}^\infty$ in a Banach space $Z$ is called basic if it is a (Schauder) basis for its span $[z_n] = \textrm{span} \{z_n\}$. Two basic sequences $(z_n)$ and $(w_n)$ in Banach spaces $Z$ and $W$, respectively, are said to be equivalent if there is an isomorphism between $[z_n]$ and $[w_n]$ which maps $z_n$ onto $w_n$, for all $n$. In this situation, if $W = c_0$ and $(w_n)$ is the unit-vector basis of $c_0$, we say that $(z_n)$ is equivalent to the canonical basis of $c_0$. For rudimentary information about bases, we refer to the classical paper of Bessaga and Pe\l czy\'nski \cite{Bess58}, the techniques of which will be utilized to prove the main results of this paper. 

\section{Results and Proofs} \label{sec:results}
In the proof of Theorem \ref{thm:c0} we make use of the embedding operator $V : M_0(X, \LL) \to C_0(\LL, Y)$ which isometrically embeds $M_0$ into the space of continuous $Y$-valued functions on $\LL$ vanishing at infinity. Explicitly
\begin{equation*}
Vx(L) = Lx, \quad x \in M_0, \, L \in \LL.
\end{equation*}
\begin{theorem} \label{thm:c0}
Suppose that $x_n \in M_0(X, \LL)$, $n = 1,2,3,\ldots$, is a sequence such that $\|x_n\|_M = 1$ and $\lim_{n\to\infty}\|x_n\|_X = 0$. Then $(x_n)_n$ has a subsequence which, as a basic sequence in $M_0(X, \LL)$, is equivalent to the canonical basis of $c_0$. 
\end{theorem}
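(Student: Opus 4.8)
The plan is to use the isometric embedding $V : M_0 \to C_0(\LL, Y)$ together with the Bessaga–Pełczyński selection principle. The key observation is that the hypotheses $\|x_n\|_M = 1$ and $\|x_n\|_X \to 0$ are designed precisely to force the "mass" of the functions $Vx_n(L) = Lx_n$ to escape to infinity in $\LL$. Indeed, suppose some compact set $K \subset \LL$ retained a fixed portion of the norm, i.e. $\sup_{L \in K}\|Lx_n\|_Y \geq \delta > 0$ along a subsequence. Since $\tau$ respects the strong operator topology and $K$ is compact, and since $x_n \to 0$ in $X$, I would want to argue that $\sup_{L \in K}\|Lx_n\|_Y \to 0$, contradicting the lower bound. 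The mechanism is that on a fixed compact set the maps $L \mapsto Lx$ are "equicontinuous enough" that $X$-convergence to zero is transmitted uniformly over $K$.

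With this escape-of-mass established, the plan is to pass to a subsequence whose images under $V$ are "almost disjointly supported" in the sense of $C_0(\LL, Y)$: for each $n$, the norm $\|x_n\|_M = \|Vx_n\|_{C_0}$ is essentially attained on a compact set $K_n$, and these compact sets can be taken to march off to infinity so that $x_{n+1}$ is nearly $0$ (in the sup over $Y$) on $K_1 \cup \cdots \cup K_n$. This is the standard gliding-hump/small-perturbation setup. Having arranged near-disjoint supports, the sequence $(Vx_n)$ behaves like the canonical $c_0$-basis inside $C_0(\LL, Y)$: finite linear combinations satisfy $\|\sum_k a_k Vx_{n_k}\|_{C_0} \approx \max_k |a_k|$, because at any point $L$ at most one term contributes appreciably while the others are uniformly small, and the upper bound $\max_k|a_k|$ is automatic from $\|x_n\|_M = 1$. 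Since $V$ is an isometry, transporting this back gives $\|\sum_k a_k x_{n_k}\|_M \approx \max_k |a_k|$, which is exactly equivalence to the $c_0$-basis.

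To make the small-perturbation argument rigorous I would invoke the principle that a sequence which is a small $\ell^1$-type perturbation of a basic sequence equivalent to the $c_0$-basis is itself such a basic sequence; this is the role played by \cite{Bess58}. Concretely, after selecting the subsequence I would replace each $Vx_{n_k}$ by its truncation $u_k$ supported on $K_k$, verify that $\sum_k \|Vx_{n_k} - u_k\|_{C_0}$ is as small as desired, check that $(u_k)$ is literally disjointly supported and hence isometrically $c_0$, and conclude that $(Vx_{n_k})$, and therefore $(x_{n_k})$, is equivalent to the $c_0$-basis.

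The main obstacle I anticipate is the escape-of-mass step, namely proving that $\sup_{L \in K}\|Lx_n\|_Y \to 0$ for compact $K$. Pointwise this is immediate ($\|Lx_n\|_Y \leq \|L\|\,\|x_n\|_X \to 0$ for each fixed $L$), but upgrading to uniformity over $K$ requires controlling the operator norms $\|L\|$ uniformly over $K$, or otherwise exploiting compactness together with the continuity of $L \mapsto Lx$. The delicate point is that the $L \in K$ need not have uniformly bounded operator norm a priori; however, the hypothesis that each $T_x : \LL \to Y$ is continuous, combined with $\sigma$-compactness and a diagonal/equicontinuity argument, should suffice to rule out a compact set absorbing a fixed fraction of the $M$-norm. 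Once this is in place, the remaining steps are the routine disjointification and perturbation arguments.
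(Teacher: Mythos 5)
Your proposal is correct, and its skeleton is the same as the paper's: embed $M_0$ isometrically into a sup-normed function space via $V$, run a gliding-hump induction, truncate, and finish with a small-perturbation argument from \cite{Bess58}. Two remarks are in order. First, the step you flagged as the main obstacle is settled exactly as you guessed, and this is precisely what the paper does: for a compact set $\mathcal{K} \subset \LL$, continuity of each $T_x$ makes $T_x(\mathcal{K})$ a compact, hence bounded, subset of $Y$, so $\sup_{L \in \mathcal{K}} \|Lx\|_Y < \infty$ for every fixed $x \in X$; the Banach--Steinhaus theorem then upgrades this pointwise bound to $\sup_{L \in \mathcal{K}} \|L\|_{X \to Y} < \infty$, whence $\sup_{L \in \mathcal{K}} \|Lx_n\|_Y \leq C\|x_n\|_X \to 0$. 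No diagonal or equicontinuity argument beyond this is needed; $\sigma$-compactness enters only to provide a compact exhaustion of $\LL$. Second, where you genuinely differ from the paper: you arrange the humps to be \emph{pairwise disjoint}. Your sets $K_k = \{L : \|Lx_{n_k}\|_Y \geq \varepsilon_k\}$ are compact (closed by continuity of $T_{x_{n_k}}$, and contained in a compact set because $x_{n_k} \in M_0$), so choosing $n_{k+1}$ with $\sup_{L \in K_1 \cup \cdots \cup K_k} \|Lx_{n_{k+1}}\|_Y < \varepsilon_{k+1}$ makes $K_{k+1}$ disjoint from its predecessors. The truncations $\mathds{1}_{K_k} V x_{n_k}$ are then literally disjointly supported functions of sup norm one, spanning an isometric copy of $c_0$, and only the principle of small perturbations is needed to conclude; do note that the truncations are no longer continuous, so this step must be carried out in the space $B(\LL, Y)$ of bounded Baire functions, which is also where the paper works. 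The paper's induction is lazier at this point: its truncation sets $\mathcal{A}_n = \{L \, : \, \|Lz_n\|_Y > 2^{-n}\}$ are only known to avoid large compact sets and may overlap one another, so the truncated sequence is not disjointly supported; instead the paper verifies that it is weakly unconditionally Cauchy, via the pointwise bound $\sum_n \|f_n(L)\|_Y < 3/2$, and invokes the full Bessaga--Pe\l czy\'nski selection principle to extract a further basic subsequence equivalent to the canonical $c_0$ basis before applying the same perturbation step. In short, your route trades the heavier selection lemma for a more careful inductive choice (and in fact yields a subsequence $(1+\varepsilon)$-equivalent to the canonical $c_0$ basis), while the paper's route needs less care in the induction but more Banach-space machinery at the end; both produce complete proofs.
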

\begin{proof}
 We will construct a subsequence $(z_n)_n$ of $(x_n)_n$ inductively. We will also construct two auxiliary sequences; a strictly increasing sequence of positive integers $(\beta_n)_n$, and a sequence $(f_n)_n$ in $B(\LL, Y)$, the space of bounded Baire measurable $Y$-valued functions equipped with the supremum norm. To begin, let $z_1 = x_1$, $\beta_1 = 1$ and $f_1 = V z_1$. For the construction, fix a strictly increasing sequence $\mathcal{K}_1 \subset \mathcal{K}_2 \subset \cdots$ of compact Baire subsets of $(\LL, \tau)$ such that $\LL = \bigcup_{n=1}^\infty \mathcal{K}_n$. We denote by $\mathcal{K}_n^c$ the complement of $\mathcal{K}_n$ in $\LL$. 

Suppose now that $z_1, \ldots, z_{n-1}$, $\beta_1, \ldots, \beta_{n-1}$, and $f_1, \ldots, f_{n-1}$ have been chosen.  Since each $z_j$ belongs to $M_0(X,\LL)$ we can pick $\beta_n > \beta_{n-1}$ such that
\begin{equation}
\|L z_j \|_Y \leq 1/2^j, \quad L \in \mathcal{K}_{\beta_n}^c, \, j = 1, \ldots, n-1.
\end{equation}
Since the operators $ L \in \mathcal{K}_{\beta_n}$ are uniformly bounded by the Banach-Steinhaus theorem, it follows from $\lim_k \|x_k\|_X = 0$ that we may choose $z_n$ to be an element from $(x_k)_k$ such that
\begin{equation} \label{set:an}
\{L \in \LL \, : \, \|Lz_n\|_Y > 1/2^n \} \subset \mathcal{K}_{\beta_n}^c.
\end{equation}
Denoting the set on the left hand side of \eqref{set:an} by $\mathcal{A}_n$, let 
\begin{equation*}
f_n = \mathds{1}_{\mathcal{A}_n} Vz_n,
\end{equation*}
where $\mathds{1}_{\mathcal{A}_n}$ is the characteristic function of $\mathcal{A}_n$.  

With the inductive process complete, we now claim that $(z_n)_{n=2}^\infty \subset M_0(X,\LL)$ has a further subsequence  equivalent to the canonical basis of $c_0$. To see this, let 
\begin{equation*}
\mathcal{B}_n = \mathcal{A}_n \setminus \cup_{j > n} \mathcal{A}_j, \quad n \geq 2.
\end{equation*}
 If $L \in \mathcal{B}_m$ for some $m \geq 2$, then $f_n(L) = 0$ for $n > m$, while by construction
\begin{equation*}
\| f_n(L) \|_Y \leq 1/2^n, \, \textrm{for } n < m.
\end{equation*}
Since $\|x_k\|_{M} = 1$ for all $k$, we of course have that $\|f_m(L)\|_Y \leq 1$. Hence, for $L \in \mathcal{B}_m$ we have
\begin{equation*}
\sum_{n=2}^\infty \|f_n(L)\|_Y \leq 1 + \sum_{n=2}^{m-1} \frac{1}{2^n}  < 3/2.
\end{equation*}
On the other hand, if $L \in \left( \cup_k \mathcal{B}_k \right)^c$, then $f_n(L) = 0$ for every $n \geq 2$, since $\cup_k \mathcal{B}_k = \cup_k \mathcal{A}_k$. For the latter equality, note that no $L \in \LL$ can belong to infinitely many sets $\mathcal{A}_k$, since $\mathcal{A}_k \subset \mathcal{K}_{\beta_k}^c$.

We have hence shown that
\begin{equation*}
\sum_{n=2}^\infty \|f_n(L)\|_Y  < 3/2, \quad \forall L \in \LL. 
\end{equation*}
It follows in particular that $(f_n)_{n=2}^\infty$ is a weakly unconditionally Cauchy \cite{Dies84} sequence in $B(\LL, Y)$. Note also that each $f_n$ was constructed as to have supremum norm $1$, $\|f_n\|_\infty = 1$. By the Bessaga-Pe\l czy\'nski selection principle (C. 1. and Lemma 3 of \cite{Bess58}) there is hence a basic subsequence $(f_{n_k})_k$ equivalent to the canonical basis of $c_0$. But then there is a positive integer $K$ such that $(V z_{n_k})_{k \geq K}$ is also basic and equivalent to the canonical basis of $c_0$, since 
\begin{equation*}
\sum_{k=1}^\infty \| V z_{n_k} - f_{n_k} \|_\infty \leq \sum_{k=1}^\infty \frac{1}{2^{n_k}} < 1.
\end{equation*}
This proves that $(z_{n_k})_{k \geq K}$ is a subsequence of the desired type.
\end{proof}

Based on Theorem \ref{thm:c0} we now prove Theorem \ref{thm:main}.
\begin{theorem} \label{thm:main}
Let $Z$ be a Banach space. A bounded operator $T : M_0(X,\LL) \to Z$ is weakly compact if and only if there for each $\varepsilon > 0$ exists an $N > 0$ such that
\begin{equation} \label{eq:weakcpctineq}
\|Tx\|_Z \leq N \|x\|_X + \varepsilon \|x\|_{M}, \quad x \in M_0(X,\LL).
\end{equation}
\end{theorem}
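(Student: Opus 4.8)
The plan is to treat the two implications by different means: duality together with the measure of weak noncompactness for the sufficiency, and the $c_0$-extraction of Theorem \ref{thm:c0} for the necessity. Throughout I would write $j : M_0(X,\LL) \to X$ for the inclusion. Since $X$ is reflexive, $j$ is weakly compact, and hence so is its adjoint $j^*$ by Gantmacher's theorem; this is the source of all the weak compactness in the argument.

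For the sufficiency I would dualize \eqref{eq:weakcpctineq}. Fix $\varepsilon > 0$ and the corresponding $N$. For $z^* \in B_{Z^*}$ and $x \in M_0$ one has $|\langle x, T^* z^* \rangle| = |\langle Tx, z^* \rangle| \leq \|Tx\|_Z \leq N\|x\|_X + \varepsilon \|x\|_M$. The key observation is that the set of functionals $\varphi \in M_0^*$ satisfying $|\varphi(x)| \leq N\|x\|_X + \varepsilon\|x\|_M$ for all $x$ is, by a Hahn--Banach/bipolar computation applied to the two seminorms $N\|jx\|_X$ and $\varepsilon\|x\|_M$, exactly the sum $N\, j^*(B_{X^*}) + \varepsilon B_{M_0^*}$. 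Thus $T^*(B_{Z^*}) \subseteq N\, j^*(B_{X^*}) + \varepsilon B_{M_0^*}$, in which $j^*(B_{X^*})$ is relatively weakly compact. As $\varepsilon$ was arbitrary, the De Blasi measure of weak noncompactness of $T^*(B_{Z^*})$ vanishes, equivalently $T^*$ is weakly compact; applying Gantmacher once more shows $T$ is weakly compact.

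For the necessity I would argue by contraposition. If \eqref{eq:weakcpctineq} fails for some $\varepsilon_0 > 0$, then for each $n$ there is $x \in M_0$ with $\|Tx\|_Z > n\|x\|_X + \varepsilon_0\|x\|_M$; normalizing so that $\|x\|_M = 1$ and using $\|Tx\|_Z \leq \|T\|$ yields a sequence $(x_n)$ with $\|x_n\|_M = 1$, $\|x_n\|_X \to 0$, and $\|Tx_n\|_Z > \varepsilon_0$. Theorem \ref{thm:c0} then furnishes a subsequence $(x_{n_k})$ equivalent to the canonical basis of $c_0$; letting $S : c_0 \to M_0$ be the induced isomorphism onto $[x_{n_k}]$, the composition $TS : c_0 \to Z$ is weakly compact. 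Since $c_0$ has the Dunford--Pettis property, $TS$ is completely continuous, and because the unit vectors tend weakly to zero in $c_0$ we obtain $\|Tx_{n_k}\|_Z = \|TS e_k\|_Z \to 0$, contradicting $\|Tx_{n_k}\|_Z > \varepsilon_0$. Hence $T$ cannot be weakly compact.

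The main obstacle is the sufficiency direction, and specifically the polar identity matching $\{\varphi \in M_0^* : |\varphi(x)| \leq N\|x\|_X + \varepsilon\|x\|_M\}$ with the sum $N\, j^*(B_{X^*}) + \varepsilon B_{M_0^*}$, together with the verification that this sum is already weak$^*$-closed (so no further closure is needed) and that the resulting $\varepsilon$-approximation by the relatively weakly compact set $N\, j^*(B_{X^*})$ genuinely forces relative weak compactness of $T^*(B_{Z^*})$. The necessity direction is by comparison routine once Theorem \ref{thm:c0} is in hand, the only delicate point being the normalization that produces an $X$-null, $M$-normalized sequence on which $T$ remains bounded below.
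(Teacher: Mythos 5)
Your proof is correct, and it shares the paper's skeleton: necessity by contraposition through Theorem \ref{thm:c0} and the Dunford--Pettis property of $c_0$, sufficiency from weak compactness of the inclusion $j : M_0 \to X$. The differences are worth recording. For sufficiency the paper simply cites a known result (Proposition 10 of \cite{Lefe08}); you reprove it via the adjoint inclusion $T^*(B_{Z^*}) \subseteq N\, j^*(B_{X^*}) + \varepsilon B_{M_0^*}$ together with Grothendieck's lemma (vanishing De Blasi measure of weak noncompactness), which is essentially the standard argument behind that citation. Note that only the inclusion ``$\subseteq$'' is needed, and it follows from the Hahn--Banach decomposition of a functional dominated by the sum of the two seminorms $N\|j\cdot\|_X$ and $\varepsilon\|\cdot\|_M$ (decompose on the diagonal of $M_0 \oplus M_0$ normed by the sum, then lift the $X$-dominated piece through $j$); moreover the weak$^*$-closedness you flag as a potential obstacle is automatic, since $N j^*(B_{X^*}) + \varepsilon B_{M_0^*}$ is a sum of two weak$^*$-compact sets and hence weak$^*$-compact. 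For necessity, after extracting the $c_0$-subsequence the paper applies the Bessaga--Pe\l czy\'nski selection principle a \emph{second} time, to the weakly unconditionally Cauchy and norm-bounded-below sequence $(Tx_{n})$, obtaining a $c_0$-copy in $Z$ and concluding that $T$ restricts to an isomorphism between two copies of $c_0$, contradicting the Dunford--Pettis property; you instead compose with the isomorphism $S : c_0 \to [x_{n_k}]$ and use that the weakly compact operator $TS$ out of $c_0$ is completely continuous, so that $\|Tx_{n_k}\|_Z = \|TSe_k\|_Z \to 0$ along the weakly null unit vector basis. Your version is slightly more economical (one selection-principle application instead of two), while the paper's version makes explicit the $c_0$-subspace on which $T$ acts as an isomorphism, matching the property-(V) characterization from \cite{Perf14} that motivated the theorem.
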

\begin{proof}
Since $X$ is reflexive, the inclusion $j : M_0(X) \to X$ is a weakly compact map. Based on this observation, it is a relatively well known fact that having \eqref{eq:weakcpctineq} implies the weak compactness of $T$ (see e.g. Proposition 10 in \cite{Lefe08}).

In the converse direction, suppose that \eqref{eq:weakcpctineq} does not hold. Equivalently, there is an $\varepsilon > 0$ and a sequence $(x_n)_n \subset M_0(X,\LL)$ with $\|x_n\|_{M} = 1$ such that
\begin{equation*} 
\|Tx_n\|_Z > n \|x_n\|_X + \varepsilon.
\end{equation*}
The boundedness of $T$ then automatically imposes $\lim_n \|x_n\|_X = 0$. Therefore Theorem \ref{thm:c0} applies, so that by passing to a subsequence we may assume that $(x_n) \subset M_0$ is equivalent to the canonical basis of $c_0$. In particular $(x_n)$ is weakly unconditionally Cauchy in $M_0$, and hence $(Tx_n)$ is weakly unconditionally Cauchy in $Z$. Since also $\|T x_n \|_Z \geq \varepsilon$ for all $n$, it has, by the Bessaga-Pe\l czy\'nski selection principle, a further subsequence $(Tx_{n_k})$ which too is equivalent to the canonical basis of $c_0$. But then both $(x_{n_k})$ and $(Tx_{n_k})$ are equivalent to the canonical basis of $c_0$, and $T$ must act as an isomorphism between the two $c_0$-subspaces $[x_{n_k}] \subset M_0$ and $[Tx_{n_k}] \subset Z$. Hence $T$ could not be weakly compact, or the Dunford-Pettis property of $c_0$ would be violated.
\end{proof}

As a corollary of Theorem \ref{thm:main} we obtain the corresponding result for operators $T : M(X,\LL) \to Z$ which are $weak^*$-$weak$ continuous. The $weak^*$-topology of $M(X,\LL)$ referred to is the one induced by the duality in Theorem \ref{thm:old}. Hence, letting $I$ denote the map of Theorem \ref{thm:old} and $T_0$ the restriction $T_0 = T|_{M_0}$, we have that $weak^*$-$weak$ continuity of $T$ means precisely that $T_0^{**}I^{-1} = T$, which by abuse of notation typically is written as $T_0^{**} = T$. 
\begin{corollary} \label{cor:main}
Let $Z$ be a Banach space and $T : M(X,\LL) \to Z$ be a bounded and $weak^*$-$weak$ continuous operator. Then $T$ is weakly compact if and only if there for each $\varepsilon > 0$ exists $N > 0$ such that
\begin{equation} \label{eq:weakcpctineq2}
\|Tx\|_Z \leq N \|x\|_X + \varepsilon \|x\|_{M}, \quad x \in M(X,\LL).
\end{equation}
\end{corollary}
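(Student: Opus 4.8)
The plan is to deduce the corollary from Theorem \ref{thm:main} applied to the restriction $T_0 = T|_{M_0}$, together with the biduality $M_0^{**} \simeq M$ of Theorem \ref{thm:old} and the weak*-weak continuity hypothesis $T_0^{**} I^{-1} = T$. The backward direction is immediate and does not even use weak*-weak continuity: since $\|\cdot\|_M$ dominates $\|\cdot\|_X$, the inclusion $j : M(X,\LL) \to X$ is bounded, hence weakly compact because $X$ is reflexive; the estimate \eqref{eq:weakcpctineq2} then forces $T$ to be weakly compact by exactly the general principle (Proposition 10 in \cite{Lefe08}) already invoked in the proof of Theorem \ref{thm:main}, now applied with $M$ in the role of $M_0$.

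For the forward direction I would first restrict: if $T$ is weakly compact then so is $T_0 = T|_{M_0}$, since $T_0(B_{M_0}) \subseteq T(B_M)$ has weakly compact closure. Theorem \ref{thm:main} then yields, for each $\varepsilon > 0$, a constant $N$ with $\|T y\|_Z \le N\|y\|_X + \varepsilon\|y\|_M$ for all $y \in M_0$. It remains to promote this estimate from $M_0$ to all of $M$, and this is where the weak*-weak continuity enters.

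Fix $x \in M$. Using Assumption \ref{as2} (or, in the merely isomorphic case, Goldstine's theorem applied to $I^{-1}x \in M_0^{**}$), choose a bounded sequence $(y_n) \subseteq M_0$ with $\|y_n\|_M \le \|x\|_M$ and $y_n \to x$ weakly in $X$. Because $J(X^*)$ is dense in $M_0^*$ (Theorem \ref{thm:old}) and $(y_n)$ is bounded, weak convergence in $X$ upgrades to weak* convergence $y_n \to I^{-1}x$ in $M_0^{**}$; the hypothesis $T = T_0^{**} I^{-1}$ then gives $T y_n \to T x$ weakly in $Z$. The difficulty is now apparent: applying the $M_0$-estimate to $y_n$ and passing to the limit only controls $\|Tx\|_Z$ from below through weak lower semicontinuity, and the term $N\|y_n\|_X$ cannot be bounded above by $N\|x\|_X$ from weak convergence alone. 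This is the main obstacle.

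I would resolve it by a Mazur convexity argument carried out simultaneously in $X$ and $Z$. Since $(y_n, T y_n) \to (x, Tx)$ weakly in the product $X \times Z$, Mazur's lemma furnishes convex combinations $\tilde y_m = \sum_j \lambda_j^{(m)} y_j \in M_0$ for which $\tilde y_m \to x$ in $X$-norm and, by linearity, $T\tilde y_m \to Tx$ in $Z$-norm, while convexity preserves $\|\tilde y_m\|_M \le \|x\|_M$. Feeding $\tilde y_m$ into the $M_0$-estimate and letting $m \to \infty$ now gives $\|Tx\|_Z \le N\|x\|_X + \varepsilon\|x\|_M$, since both $\|\tilde y_m\|_X \to \|x\|_X$ and $\|T\tilde y_m\|_Z \to \|Tx\|_Z$. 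As $x \in M$ and $\varepsilon > 0$ were arbitrary, this is precisely \eqref{eq:weakcpctineq2}, completing the forward direction.
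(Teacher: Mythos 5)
Your proof is correct, and its overall skeleton matches the paper's (reduce to Theorem \ref{thm:main} for $T_0 = T|_{M_0}$, then promote the resulting estimate from $M_0$ to all of $M$ using weak-star approximation and the $\textrm{weak}^*$-$\textrm{weak}$ continuity of $T$), but you resolve the key difficulty by a genuinely different device. The paper renorms $M_0^{**}$ by the combined expression $N\|x\|_X + \varepsilon\|x\|_M$ and applies Goldstine's theorem, together with weak-star metrizability of the ball, to produce a sequence $x_n \in M_0$ converging weak-star to $x$ along which this \emph{combined} quantity does not increase; it then finishes with weak lower semicontinuity of $\|\cdot\|_Z$ applied to the weak convergence $Tx_n \to Tx$. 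You instead control only the $M$-norm of the approximating sequence (via Assumption \ref{as2}, or plain Goldstine in the isomorphic case) and then deal with the problematic term $N\|y_n\|_X$ --- which, as you correctly identify, cannot be bounded via weak convergence alone --- by Mazur's lemma applied in the product $X \times Z$, so that the convex combinations $\tilde y_m$ converge in norm simultaneously in $X$ and, after applying $T$, in $Z$. The product trick is a nice touch: it removes any need for the convex combinations to remain weak-star convergent, and it replaces the paper's renormed-Goldstine step --- whose justification, namely that the prescribed formula dominates the bidual norm of the renormed $M_0$, is a slightly subtle Hahn--Banach splitting left implicit in the paper --- by the completely standard fact that norm and weak closures of convex sets coincide. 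A further small deviation: your backward direction applies the principle of Proposition 10 in \cite{Lefe08} directly on $M$, so it avoids Gantmacher's theorem and never uses $\textrm{weak}^*$-$\textrm{weak}$ continuity there, whereas the paper routes both directions through $T_0$ and Gantmacher.

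One minor caveat: in the merely isomorphic case (Assumption \ref{as1} only), Goldstine yields $\|y_n\|_M \leq \|I^{-1}\| \, \|x\|_M$ rather than $\|y_n\|_M \leq \|x\|_M$, so your final estimate reads $\|Tx\|_Z \leq N\|x\|_X + \varepsilon \|I^{-1}\| \|x\|_M$. Since $\|I^{-1}\|$ is a fixed constant, this is harmless --- run the argument with $\varepsilon/\|I^{-1}\|$ in place of $\varepsilon$ --- but it should be said; the same normalization issue is implicit in the paper's proof as well.
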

\begin{proof}
Let $T_0 = T|_{M_0}$. The continuity hypothesis can equivalently be stated as $T_0^{**} = T$. Hence it follows from Gantmacher's theorem that $T$ is weakly compact if and only if \eqref{eq:weakcpctineq} holds. 

It remains to see that \eqref{eq:weakcpctineq} implies \eqref{eq:weakcpctineq2}. Suppose that $\varepsilon, N > 0$ are such that \eqref{eq:weakcpctineq} holds and let $x \in M(X,\LL)$. We renorm $M_0^{**}$ by equipping it with the equivalent norm 
\begin{equation*}
\| I^{-1} x \|_{\textrm{alt}} = N \|x\|_X + \varepsilon \|x\|_{M}, \quad I^{-1} x \in M_0^{**}.
\end{equation*}
  Invoking the weak-star-metrizability of the unit ball of $M_0^{**}$ ($M_0^*$ is separable by Theorem \ref{thm:old}), it follows that there exists a sequence of points $x_n \in M_0(X, \LL)$ converging weak-star to $x$ such that
\begin{equation*}
N \|x_n\|_X + \varepsilon \|x_n\|_{M} \leq N \|x\|_X + \varepsilon \|x\|_{M}, \quad \forall n.
\end{equation*}
By the continuity of $T$, $T x_n$ converges weakly to $Tx$, and therefore
\begin{equation*}
\|Tx\|_Z \leq \varliminf_n \|Tx_n\|_Z \leq \varliminf_n \left( N \|x_n\|_X + \varepsilon \|x_n\|_{M} \right) \leq N \|x\|_X + \varepsilon \|x\|_{M}.
\end{equation*}
\end{proof}
\section{Examples} \label{sec:examples}
Our first example will be of a general nature, to illustrate the idea that when compactness for a class of operators can be determined through a testing condition, then Corollary \ref{cor:main} may sometimes be used to show that weak compactness and compactness are equivalent for the class.
\begin{example} \label{ex:gen}
Suppose that $\{T_\alpha \}_\alpha$ is a family of bounded $\textrm{weak}^*$-$\textrm{weak}$ continuous operators $T_\alpha : M(X,\LL) \to Z$, $Z$ a Banach space, and that there is a "testing sequence" $(x_n) \subset M(X,\LL)$ such that:
\begin{itemize}
\item the sequence $(x_n)$ is bounded in $M(X,\LL)$,
\item $\lim_n \|x_n\|_X = 0$, and
\item for every $\alpha$, $\lim_n \|T_\alpha x_n\|_Z = 0$ implies that $T_\alpha$ is compact.
\end{itemize}
Suppose now that $T_\alpha$ is weakly compact. Then Corollary \ref{cor:main} immediately implies that $T_\alpha x_n$ must tend to zero in $Z$, so that $T_\alpha$ is actually compact. Hence, in the above situation, an operator $T_\alpha$ is compact if and only if it is weakly compact if and only if $\lim_n \|T_\alpha x_n\|_Z = 0$.
\end{example}

We now turn to several concrete examples of composition and integral operators acting on spaces of analytic functions. For an analytic function $\varphi: \D\to \D$, $C_\varphi$ denotes the composition operator
\begin{equation*}
C_\varphi f(z) = f(\varphi(z)), \quad z \in \D,
\end{equation*} 
where $f$ is a holomorphic function on $\D$, $f \in \textrm{Hol}(\D)$. We begin by considering composition operators $C_\varphi$ on weighted spaces.

\begin{example}
Let $v : \D \to \R_+$ be a strictly positive, radial, continuous weight on $\D$ such that $\lim_{|z| \to 1} v(z) = 0$, and consider the weighted spaces of holomorphic functions
\begin{equation*}
H_v^\infty = \{f \in \textrm{Hol}(\D) \, : \, \sup_{z\in \D} |f(z)|v(z) < \infty \}
\end{equation*}
and
\begin{equation*}
H_v^0 = \{f \in \textrm{Hol}(\D) \, : \, \varlimsup_{|z| \to 1^-} |f(z)|v(z) = 0\}.
\end{equation*}
They can be realized within our framework \cite{Perf13}, with the role of $X$ taken on by the analytic Bergman space on the disc with weight $v^2$, $$X = L^2_a(v^2 \, dA, \D) =  L^2(v^2 \, dA, \D) \cap \textrm{Hol}(\D).$$ Here $dA = dx \, dy$ denotes area measure. The desired approximation property Assumption \ref{as2} can be verified by considering dilations $f(rz)$ of a function $f \in H_v^\infty$, $r < 1$ (see \cite{Bone98}). 

Let $$\tilde{u}(z) = \sup_{\|f\|_{H_v^\infty} \leq 1} |f(z)|$$ and associate with $v$ the weight $\tilde{v} = 1/\tilde{u}$. 
Then $\tilde{v}$ is a weight of the same type as $v$ and $H_v^\infty = H^\infty_{\tilde{v}}$
 isometrically (\cite{Bone98}). $v$ is called \textit{essential} if $v$ is comparable to $\tilde{v}$. Given also a weight $w$ of the same type as $v$, Bonet et. al. characterized in \cite{Bone98} the compact composition operators $C_\varphi : H_v^\infty \to H_w^\infty$. We utilize Theorem \ref{thm:main} to add also weak compactness to their description. For simplicity we suppose that both $v$ and $w$ are essential.

\begin{proposition}
The following are equivalent:
\begin{description}
\item[i)] $C_\varphi : H_v^\infty \to H_w^\infty$ is compact,
\item[ii)] $C_\varphi : H_v^0 \to H_w^0$ is compact,
\item[iii)] $\lim_{r \to 1^-} \sup_{|\varphi(z)| > r} \frac{w(z)}{v(\varphi(z))} = 0$ or $\overline{\varphi(\D)} \subset \D$,
\item[iv)] $\lim_{|z| \to 1^-} \frac{w(z)}{v(\varphi(z))} = 0$,
\item[i')] $C_\varphi : H_v^\infty \to H_w^\infty$ is weakly compact,
\item[ii')] $C_\varphi : H_v^0 \to H_w^0$ is weakly compact.
\item[iii')] $C_\varphi(H_v^\infty) \subset H_w^0$
\end{description}
\end{proposition}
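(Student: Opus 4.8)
The plan is to close a cycle of implications that threads the two new weak-compactness conditions through the compactness characterization of Bonet, Domański, Lindström and Taskinen \cite{Bone98}. That reference already supplies the equivalence of the compactness statements i), ii), iii) and the boundary condition iv); moreover iv) $\Rightarrow$ iii') is immediate, since for $f \in H_v^\infty$ one has
\[
|f(\varphi(z))|\,w(z) \leq \|f\|_{H_v^\infty}\,\frac{w(z)}{v(\varphi(z))} \to 0, \quad |z| \to 1^-,
\]
while the reverse iii') $\Rightarrow$ iv) is the delicate ``single test function'' construction carried out in \cite{Bone98}. It therefore remains only to insert i') and ii') into this circle, and the real content is that weak compactness forces iv). The surrounding implications are soft: i) $\Rightarrow$ i') because compact operators are weakly compact; and i') $\Rightarrow$ ii') by restriction, noting first that the standing boundedness of $C_\varphi$ (equivalently $\sup_z w(z)/v(\varphi(z)) < \infty$) forces $C_\varphi(H_v^0) \subseteq H_w^0$, and then that the restriction of a weakly compact operator to the closed subspace $H_v^0$, which then takes values in the weakly closed subspace $H_w^0$, is weakly compact as an operator into $H_w^0$.

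The central step is ii') $\Rightarrow$ iv), where I would apply Theorem \ref{thm:main} to $T = C_\varphi : H_v^0 \to H_w^0$, recalling that $H_v^0$ and $H_w^0$ are spaces of the form $M_0(X, \LL)$ with $X = L^2_a(v^2\,dA, \D)$, and that the $H_w^0$-norm agrees with the $H_w^\infty$-norm. Weak compactness then yields, for every $\varepsilon > 0$, a constant $N$ with
\[
\|C_\varphi f\|_{H_w^\infty} \leq N \|f\|_X + \varepsilon \|f\|_{H_v^\infty}, \quad f \in H_v^0.
\]
To exploit this I would feed in the normalized extremal functions attached to the weight: for $a \in \D$ pick $f_a \in H_v^0$ with $\|f_a\|_{H_v^\infty} \leq 1$ and $|f_a(a)| = 1/\tilde v(a)$, so that essentiality of $v$ gives $|f_a(a)|\,v(a) \asymp 1$. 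Choosing $z$ with $a = \varphi(z)$ and bounding the supremum norm below at the single point $z$,
\[
\frac{w(z)}{v(\varphi(z))} \lesssim |f_{\varphi(z)}(\varphi(z))|\,w(z) \leq \|C_\varphi f_{\varphi(z)}\|_{H_w^\infty} \leq N \|f_{\varphi(z)}\|_X + \varepsilon.
\]
Sending $|z| \to 1^-$ along a sequence that would witness a failure of iv) --- such a sequence necessarily has $|\varphi(z)| \to 1$, since otherwise $w(z) \to 0$ while $v(\varphi(z))$ stays bounded below on compact subsets of $\D$ --- makes the $X$-term vanish, leaving $\limsup w(z)/v(\varphi(z)) \lesssim \varepsilon$; as $\varepsilon$ is arbitrary, iv) follows. (If instead $\overline{\varphi(\D)} \subset \D$, then iii) holds outright and $C_\varphi$ is compact.)

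The main obstacle is precisely the claim that $\|f_a\|_X \to 0$ as $|a| \to 1^-$, which is what renders the $X$-term negligible above. The natural route is a domination argument. The chosen test functions are bounded in $H_v^\infty$ and, peaking near the boundary point $a$, converge to $0$ uniformly on compact subsets of $\D$ as $|a| \to 1^-$; this qualitative local-uniform vanishing is the content of the peaking constructions in \cite{Bone98}. Granting it, $|f_a(z)|^2 v(z)^2 \leq \|f_a\|_{H_v^\infty}^2 \leq 1$ pointwise while $|f_a(z)|^2 v(z)^2 \to 0$ for each fixed $z$, so dominated convergence on the finite-area disc gives $\|f_a\|_X^2 = \int_\D |f_a|^2 v^2 \, dA \to 0$. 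Checking that the extremal (or suitably chosen peaking) functions genuinely vanish locally uniformly, rather than merely staying bounded, is the one point that requires care and is where the radial, essential structure of the weight enters. With this in hand the cycle i) $\Rightarrow$ i') $\Rightarrow$ ii') $\Rightarrow$ iv) $\Rightarrow$ i) closes, and combined with the \cite{Bone98} equivalences among ii), iii), iii') and iv) it shows that all the listed conditions coincide.
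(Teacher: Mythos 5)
Your overall architecture is the same as the paper's: quote \cite{Bone98} for the equivalence of \textbf{i)}--\textbf{iv)}, dispose of the soft implications (Gantmacher / restriction), and reduce everything to showing that weak compactness forces \textbf{iv)} by testing the inequality of Theorem \ref{thm:main} (the paper uses Corollary \ref{cor:main}) on near-extremal functions supplied by essentiality. But your central step has a genuine gap, and it is exactly the point you flag yourself: the claim that $\|f_a\|_X \to 0$ as $|a| \to 1^-$. This does not follow from the properties your argument actually uses (namely $\|f_a\|_{H_v^\infty} \leq 1$ and $|f_a(a)|\,v(a) \gtrsim 1$), and no appeal to ``peaking constructions'' can rescue it, because there exist admissible choices for which it is false. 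Concretely, take $v(z) = 1-|z|^2$ and $f_a(z) = \frac{1-|a|^2}{(1-\bar{a}z)^2}$, which satisfies $\|f_a\|_{H_v^\infty} \leq 1$, $|f_a(a)| = 1/v(a)$, and $f_a \in H_v^0$. Then $g_a = \tfrac{1}{2}(f_a + 1)$ also lies in the unit ball of $H_v^0$ (constants belong to $H_v^0$ since $v$ vanishes at the boundary) and satisfies $|g_a(a)|\,v(a) \geq \tfrac14$ for $|a|$ near $1$, yet $g_a \to \tfrac12$ locally uniformly, so by dominated convergence $\|g_a\|_X \to \tfrac12\|1\|_X > 0$. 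Thus the normalization you impose is compatible with test functions whose $X$-norms stay bounded away from zero, and your key display proves nothing for such a choice.

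The paper supplies precisely the missing device. Along a sequence $(z_n)$ witnessing the failure of \textbf{iv)} one has $|\varphi(z_n)| \to 1$ (your argument for this is fine), so one can pick integers $\alpha_n \to \infty$ with $|\varphi(z_n)|^{\alpha_n} \geq \tfrac12$ and replace the near-extremal functions $f_n$ by $g_n = z^{\alpha_n} f_n$. These still lie in the unit ball of $H_v^\infty$, they tend to zero pointwise on $\D$ \emph{by construction} (because $\alpha_n \to \infty$ and $|z|<1$), so dominated convergence gives $\|g_n\|_X \to 0$, while the lower bound at the test point survives up to a factor $2$: $|g_n(\varphi(z_n))| \geq \tfrac12 |f_n(\varphi(z_n))|$. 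Feeding $g_n$ into the weak-compactness inequality then yields the contradiction. A secondary issue in your write-up: since you invoke Theorem \ref{thm:main} on $H_v^0$, your test functions must lie in $H_v^0$, and exact extremals attaining $1/\tilde{v}(a)$ are only guaranteed (by normal families) in $H_v^\infty$; the paper avoids this by applying Corollary \ref{cor:main} to $C_\varphi : H_v^\infty \to H_w^\infty$ directly, using the weak$^*$-weak continuity verified in \cite{Bone98}, so that its test functions need only belong to $H_v^\infty$. Finally, your attribution of \textbf{iii')} $\Rightarrow$ \textbf{iv)} to a ``single test function'' construction in \cite{Bone98} is shakier than needed: in the paper \textbf{iii')} is equivalent to \textbf{i')} and \textbf{ii')} directly by Gantmacher's theorem (via $(H_v^0)^{**} = H_v^\infty$), so no separate argument is required for it.
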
 
\begin{proof}
The equivalences of \textbf{i)}-\textbf{iv)} are established in \cite{Bone98}, as is the (trivial) verification that $C_\varphi$ is $\textrm{weak}^*$-$\textrm{weak}$ continuous. The equivalences between \textbf{i')}-\textbf{iii')} follow from Gantmacher's theorem. We hence only need to show that \textbf{i')} implies \textbf{iv)}, which we do by following the proof of \textbf{i)} implies \textbf{iv)} and applying the criterion given by Corollary \ref{cor:main}.

If \textbf{iv)} does not hold, there is a sequence $(z_n)$ in $\D$ converging to a point $z_0 \in \partial \D$ such that $w(z_n) \geq c v(\varphi(z_n))$ for all $n$, for some $c > 0$. Since $v$ is essential, we can choose $f_n$ such that $\|f_n\|_{H^\infty_v} = 1$ and $|f_n(\varphi(z_n))| \sim 1/v(\varphi(z_n))$. It has to hold that $|\varphi(z_n)| \to 1$, or \textbf{i')} would be contradicted; we may select non-negative integers $\alpha_n \to \infty$ such that $|\varphi(z_n)|^{\alpha_n} \geq 1/2$ for all $n$. Consider the functions $g_n = z^{\alpha_n} f_n$. Since $|z|^{ \alpha_n}$ tends pointwise to zero in $\D$, and $|f_n|v_n$ is uniformly bounded, it follows by dominated convergence that $g_n$ converges to zero in $X = L^2_a(v^2)$. However,
\begin{equation*}
\|C_\varphi g_n\|_{H^\infty_w} \geq |g_n(\varphi(z_n)) w(z_n)| \geq c|\varphi(z_n)|^{\alpha_n} |f_n(\varphi(z_n))| v(\varphi(z_n)) \gtrsim \frac{c}{2},
\end{equation*}
contradicting \eqref{eq:weakcpctineq2}.
\end{proof}
\end{example}
For the next examples we introduce the spaces $\BMOA$ and $\VMOA$ of analytic functions of bounded and vanishing mean oscillation on the unit disc $\D$. To fit them into our framework, for $a \in \D$ and $\lambda \in \T$, let $\phi_{a,\lambda}$ be the disc automorphism
\begin{equation*}
\phi_{a, \lambda}(z) = \lambda \frac{a-z}{1-\bar{a}z}.
\end{equation*} 

Further, let $X = Y = H^2 / \C$, where $H^2$ is the usual Hardy space on the disc, and let $\LL$ consist of all composition operators $L_{\phi_{a,\lambda}} : H^2/\C \to H^2/\C$, 
\begin{equation*}
 L_{\phi_{a,\lambda}} f = f \circ {\phi_{a,\lambda}}  - f({\phi_{a,\lambda}} (0)).
\end{equation*}
We equip $\LL$  with the topology of $\D \times \T$. Then
\begin{equation}
M(H^2/\C, \LL) = \BMOA, \quad M_0(H^2/\C,\LL) = \VMOA,
\end{equation}
see \cite{Perf13}. We also have the Bloch spaces $B$ and $B_0$,
\begin{equation}
M(L^2_a/\C, \LL) = B, \quad M_0(L^2_a/\C,\LL) = B_0,
\end{equation}
where $L^2_a = L^2(\D) \cap \textrm{Hol}(\D)$ is the standard analytic Bergman space on the disc.

\begin{example}
Let $\varphi: \D\to \D$ be an analytic function. Several concrete realizations of Example \ref{ex:gen} can be given by considering composition operators $C_\varphi$ acting on spaces of analytic functions. In \cite{Tjani96} it is shown that $C_\varphi : Z \to B$, where $Z = B$ or $Z = \BMOA$, is compact if and only if $\lim_{|a| \to 1} \|C_\varphi \phi_{a,\lambda} \|_{B} = 0$, yielding that $C_\varphi : Z \to B$ is weakly compact if and only if compact.   If $\varphi \in B_0$, then $C_\varphi$ acts boundedly on $B_0$, and it follows in combination with Gantmacher's theorem that $C_\varphi : B_0 \to B_0$ is weakly compact if and only if compact, a result first shown in \cite{Madi95}. A more intricate example where Example \ref{ex:gen} applies is provided by \cite{Lait13}. Namely, $C_\varphi : \BMOA \to \BMOA$ is (weakly) compact if and only if $\lim_{|a| \to 1} \|C_\varphi \phi_{a,\lambda} \|_{\BMOA} = 0$.
\end{example}
The study of compact composition operators is well-developed. In recent contributions to the field, e.g. \cite{Cont14}, \cite{Lait13}, \cite{Lefe10}, the use of Banach space techniques has been essential. In fact, something reminiscent of Theorem \ref{thm:c0} often plays an important role. 

We conclude with an example of integral operators. The symbols of the operators will belong either to the logarithmic $\BMOA$-space $\LMOA = M(H^2/\C, \mathcal{K})$, or its corresponding small space $\LMOA_0 = M_0(H^2/\C, \mathcal{K})$. Here $\mathcal{K}$ consists of the weighted composition operators
\begin{equation*}
 K_{\phi_{a,\lambda}} f = \log \frac{2}{1-|a|} \left [ f \circ {\phi_{a,\lambda}}  - f({\phi_{a,\lambda}} (0))\right].
\end{equation*}

\begin{example}
For an analytic function $g$ in $\D$, we denote by $T_g$ the Volterra-type operator
\begin{equation}
T_g f (z) = \int_0^z f(\zeta) g'(\zeta) \, d \zeta, \quad z \in \D,
\end{equation}
acting on analytic functions $f$ in $\D$. Siskakis and Zhao \cite{Sisk99} showed that $T_g : \BMOA \to \BMOA$ is bounded if and only if $g \in \LMOA$. They proved in the same paper that $T_g : \BMOA \to \BMOA$ is compact if and only if $g \in \LMOA_0$, and posed the question whether $T_g : \BMOA \to \BMOA$ can be weakly compact without being compact. This was answered in the negative by Laitila, Mihkinen, and Nieminen \cite{Lait11}. The purpose of this example is to illustrate that the question may in fact be resolved using Siskakis and Zhao's original argument, when applied in conjuction with Corollary \ref{cor:main}.

First we point out that the boundedness of $T_g$, $g \in \LMOA$, automatically implies that $T_g(\VMOA) \subset \VMOA$, so that $T_g |_{\VMOA} : \VMOA \to \VMOA$ is a bounded operator. Secondly, it is easily verified that $\left( T_g |_{\VMOA} \right)^{**} = T_g$. That is, $T_g$ is $\textrm{weak}^*$-$\textrm{weak}$ continuous. By Gantmacher's theorem it follows that $T_g$ (or equivalently $T_g |_{\VMOA}$) is weakly compact if and only if $T_g(\BMOA) \subset \VMOA$.

When proving that compactness implies $g \in \LMOA_0$ in \cite{Sisk99}, the only step where compactness is used, as opposed to weak compactness, is in showing that $\lim_n \|T_g q_n \|_{\BMOA} = 0$, where 
\begin{equation*}
q_n(z) = \log \frac{1 - \bar{u}z}{1-\bar{u}_n z},
\end{equation*}
for a point $u \in \partial \D$ and a sequence $(u_n) \subset \D$ of points converging to $u$. However, $q_n$ is uniformly bounded in $\BMOA$, $$\|q_n \|_{\BMOA} \lesssim \|\log(1-z) \|_{\BMOA},$$ and $\lim_n \|q_n\|_{H^2} = 0$, so it follows from Corollary \ref{cor:main} that $T_g q_n \to 0$ in $\BMOA$, assuming only the weak compactness of $T_g$. With this remark in hand, one can follow the proof in \cite{Sisk99} verbatim to see that $T_g$ is weakly compact if and only if $g \in \LMOA_0$.
\end{example}

\end{document}